\newtheorem{theorem}{Theorem}[section]
\newtheorem{lemma}[theorem]{Lemma}
\newtheorem{corollary}[theorem]{Corollary}
\theoremstyle{definition}
\DeclareFontFamily{U}{mathx}{}
\DeclareFontShape{U}{mathx}{m}{n}{ <-> mathx10 }{}
\DeclareSymbolFont{mathx}{U}{mathx}{m}{n}
\DeclareMathAccent{\widecheck}{0}{mathx}{"71}
\newcommand{\wcheck}[1]{\mathpalette\wcheck@{#1}}
\newcommand{\wcheck@}[2]{%
\begingroup
\edef\wcheck@font{\the
\ifx#1\displaystyle\textfont\else\ifx#1\textstyle\textfont
\else\ifx#1\scriptstyle\scriptfont\else\scriptscriptfont\fi\fi\fi\@ne
}%
\sbox\z@{\wcheck@font\mbox{#2}\mbox{\char\the\skewchar\font}}%
\sbox\tw@{\wcheck@font#2\char\the\skewchar\font}%
\dimen@=\dimexpr\wd\tw@-\wd\z@\relax
{\,\kern2\dimen@\widecheck{\!\kern-2\dimen@#2\!}\,}%
\endgroup
}
\newcommand{\comment}[1]{}
\newcommand{\F}{\mathcal F}
\newcommand{\AW}{\mathcal A\mathcal W}
\newcommand{\X}{\mathcal X}
\newcommand{\cpl}{\mathrm{cpl}}
\newcommand{\cplbc}{\cpl_{\mathrm{bc}}}
\newcommand{\dd}{\mathrm{d}}
\def\fcmp{\mathbin{\raise 0.6ex\Hbox{\oalign{\cHfil$\scriptscriptstyle \mathrm{o}$\Hfil\cr\Hfil$\scriptscriptstyle\mathrm{9}$\Hfil}}}}
\numberwithin{equation}{section}
\author{Mathias Beiglb\"ock} 
\author{Markus Zona}
\keywords{causal transport, adapted Wasserstein distance}
\date{}
\begin{document}
\title[Adapted Pinsker]{Pinsker's inequality for adapted total variation}

\begin{abstract} Pinsker's classical inequality asserts that the total variation $TV(\mu, \nu)$ between two probability measures is bounded by $\sqrt{ 2H(\mu|\nu)}$ where $H$ denotes the relative entropy (or Kullback-Leibler divergence). 
Considering the discrete metric, $TV$ can be seen as a Wasserstein distance and as such possesses an \emph{adapted} variant $ATV$. Adapted Wasserstein distances have distinct advantages over their classical counterparts when $\mu, \nu$ are the laws of stochastic processes $(X_k)_{k=1}^n, (Y_k)_{k=1}^n$ and exhibit numerous applications from stochastic control to machine learning. In this note we observe that the adapted total variation distance $ATV$ satisfies the Pinsker-type inequality
$$ ATV(\mu, \nu)\leq \sqrt{n} \sqrt{2 H(\mu|\nu)}.$$
\end{abstract}

\keywords{causal optimal transport, adapted Wasserstein distance, Pinsker's inequality}

\maketitle

\section{Introduction}%
Throughout $\mu, \nu$ denote probabilities on some Polish space $\X$. Their total variation distance 
is 
\begin{align*}
TV(\mu,\nu)= 
\int{\textstyle  \left| \frac{\dd \mu}{\dd(\mu+\nu)} - \frac{\dd \nu}{\dd(\mu+\nu)}   \right| }\, \dd (\mu+\nu) = 2 \sup_{A\subseteq \X, \text{mbl}} |\mu(A)-\nu(A)|.
\end{align*}
Alternatively $TV$ can be written as a Wasserstein distance
\begin{align*}\label{eq:TVviaOT}
TV(\mu, \nu)= 2 \inf_{\pi\in \cpl(\mu, \nu)} \int \rho(x,y)\, \dd\pi(x,y),    
\end{align*}
where $\rho(x,y)=I_{x\neq y}$ denotes the discrete metric and $\cpl(\mu, \nu)$ the set of couplings or transport plans  i.e.\ probabilities on $\X\times \X$ with marginals $\mu, \nu$. 
Pinsker's classical inequality asserts that \[TV(\mu, \nu)\leq \sqrt{2H(\mu|\nu)},\] where $H$ denotes the relative entropy, i.e.\ $H(\mu|\nu) =\int\log(\dd \mu / \dd \nu)\, \dd\mu$ if $\mu\ll \nu$ and $H(\mu|\nu)= \infty $ otherwise. 

Our goal is to provide an \emph{adapted} version, which is of interest in the case where $\mu$ and $\nu$ are the laws of stochastic processes $(X_k)_{k=1}^n$ and $(Y_k)_{k=1}^n$, respectively. Here, $\X$ will be of product type, and it is notationally convenient to assume that $\X = \X_1 \times \ldots \times \X_n$, where each $\X_i$ is a Polish space in which $X_i$ and $Y_i$, for $i \leq n$, take values. When measuring the distance between stochastic processes, it is often useful to consider a variant of the Wasserstein distance that takes into account the inherent information structure of the processes. The resulting \emph{adapted Wasserstein distance} has a variety of applications from stochastic optimization, stochastic control, and mathematical finance to the theory of geometric inequalities and machine learning; see \cite{PfPi14, La18, XuAc21, BaBePa21, AcKraPa24, CoLi24, JiOb24, BaWi23} and the references therein.
The difference to the usual Wasserstein distance is that one restricts to transport plans which are adapted / causal in the following sense: Let $(\F_k)_{k\leq n}$ be the canonical filtration on $\X$, i.e.\ $\F_k$ is generated by the Borel sets of the form $A_1\times \ldots \times A_k\times \X_{k+1}\times \ldots \times \X_n\subseteq \X$ and consider the  $\mu$-disintegration $(\pi_x)_x$ of $\pi\in \cpl(\mu, \nu)$. Then 
\begin{align}\label{eq:CausalDef} \pi \text{ is \emph{causal} } :\Longleftrightarrow \ 
 x \mapsto \pi_x(B) \text{  is $\F_k$-measurable for each $B\in \F_k$.}
 \end{align}
When $\pi$ is of Monge-type, i.e.\ induced by some $T=(T_k)_{k=1}^n: \X\to \X$ it is causal if and only if $(T_k)_{k=1}^n$ is an adapted process, see \cite[Remark 2.3]{BaBeLiZa17}. 
A transport plan $\pi$ is \emph{bicausal}, in signs $\pi\in \cplbc(\mu, \nu)$, if \eqref{eq:CausalDef} holds for the $\mu$- and the $\nu$-disintegration.  
The \emph{adapted total variation} (see \cite{EcPa22}) is then given by 
\begin{align*}
ATV(\mu, \nu):= 2 \inf_{\pi\in \cplbc(\mu, \nu)} \int \rho(x,y)\, \dd\pi(x,y) \quad \big(\ \geq TV(\mu, \nu) \ \big) . 
\end{align*}
Our main result is the following adapted Pinsker inequality: 
\begin{theorem}\label{thm:MainTheorem}
  Assume that $\mu, \nu$ are probabilities on the Polish space $\X_1\times \ldots \times \X_n$. Then
  \begin{align}\label{eq:AP} 
  ATV(\mu, \nu)\leq \sqrt{n}\sqrt{2H(\mu|\nu)}.\end{align}
\end{theorem}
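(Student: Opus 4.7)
The plan is to construct an explicit bicausal coupling $\pi^\ast \in \cplbc(\mu, \nu)$ in a Knothe--Rosenblatt fashion, reduce the bound on $\int \rho\,\dd\pi^\ast$ to applying the classical Pinsker inequality conditionally on the history, and then chain the resulting estimates via the chain rule for relative entropy combined with the Cauchy--Schwarz inequality; the factor $\sqrt{n}$ will appear through the latter.

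Since \eqref{eq:AP} is trivial when $H(\mu|\nu)=\infty$, I would assume $\mu\ll\nu$, so that both $\mu$ and $\nu$ admit successive disintegrations $\mu_{k|x_{1:k-1}}$ and $\nu_{k|y_{1:k-1}}$ for $k=1,\ldots,n$. The recursive construction is as follows: given $(x_{1:k-1},y_{1:k-1})$, the conditional law of $(X_k,Y_k)$ is chosen to be an optimal total variation coupling of $\mu_{k|x_{1:k-1}}$ and $\nu_{k|x_{1:k-1}}$ whenever $x_{1:k-1}=y_{1:k-1}$, and any coupling (say, the product) of $\mu_{k|x_{1:k-1}}$ and $\nu_{k|y_{1:k-1}}$ otherwise. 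Because the $X$- and $Y$-marginals of each conditional coupling agree with the prescribed kernels, the resulting $\pi^\ast$ is bicausal, as in \cite{BaBeLiZa17}. The reason for treating the diagonal case separately is critical: on $\{X_{1:k-1}=Y_{1:k-1}\}$ the two conditionals carry the \emph{same} conditioning, which is precisely the situation in which the chain rule of relative entropy applies.

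Next, let $\tau:=\min\{k:X_k\neq Y_k\}$, so that $\rho(X,Y)=I_{X\neq Y}=I_{\tau\leq n}$. The inclusion $\{\tau=k\}\subseteq\{X_{1:k-1}=Y_{1:k-1}\}$ together with the construction and a conditional application of Pinsker yields
\[
\P_{\pi^\ast}(\tau=k)\leq \E_\mu\!\left[\tfrac{1}{2}\,TV(\mu_{k|X_{1:k-1}},\nu_{k|X_{1:k-1}})\right]\leq \E_\mu\!\left[\tfrac{1}{2}\sqrt{2H(\mu_{k|X_{1:k-1}}\,|\,\nu_{k|X_{1:k-1}})}\right].
\]
Writing $h_k:=\E_\mu[H(\mu_{k|X_{1:k-1}}\,|\,\nu_{k|X_{1:k-1}})]$, two applications of Cauchy--Schwarz (first inside the expectation, then across the sum over $k$) combined with $I_{X\neq Y}\leq \sum_{k=1}^n I_{\tau=k}$ give
\[
ATV(\mu,\nu)\leq 2\sum_{k=1}^n \P_{\pi^\ast}(\tau=k)\leq \sum_{k=1}^n \sqrt{2h_k}\leq \sqrt{n}\,\sqrt{2\sum_{k=1}^n h_k},
\]
at which point the relative entropy chain rule $\sum_{k=1}^n h_k=H(\mu|\nu)$ closes the argument.

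The main point to be careful about is measurability: the step-wise recipe must furnish a Borel transport kernel in $(x_{1:k-1},y_{1:k-1})$, which is standard either by measurable selection or, more explicitly, via the closed-form optimal TV coupling (supported on the diagonal by $\mu\wedge\nu$ and on a product of the two excesses). Verifying the bicausality of $\pi^\ast$ is routine given its Markovian build (cf.\ \cite{BaBeLiZa17}). Apart from these formalities, the proof is essentially the three displays above.
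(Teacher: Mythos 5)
Your proof is correct, and it takes a route that is recognizably different from the paper's even though the three essential ingredients (a Knothe--Rosenblatt type bicausal coupling, conditional Pinsker, Cauchy--Schwarz to pick up $\sqrt n$, and the chain rule for relative entropy) coincide. The paper proceeds by first proving an \emph{exact} formula for $ATV$ (its Lemma~\ref{le:ATVsimple}), namely
$ATV(\mu,\nu)=TV(\mu_1,\nu_1)+\int TV(\mu^{x_1},\nu^{x_1})\,\dd(\mu_1\wedge\nu_1)+\cdots$, obtained via the iterated characterization of bicausal couplings in \eqref{eq:iteratechar} plus a measurable-selection argument; it then applies Pinsker termwise and a Jensen step with the auxiliary finite measure $m=\delta_0+(\mu_1\wedge\nu_1)+\cdots$ of total mass $\leq n$. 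You instead construct \emph{one} explicit bicausal coupling $\pi^\ast$, introduce the first-discrepancy time $\tau$, and read off $\P_{\pi^\ast}(\tau=k)$ directly from the diagonal branch of the construction, bounding $\mu\wedge\nu$-type weights by $\mu$ from the start. This is genuinely leaner: you avoid proving the identity of Lemma~\ref{le:ATVsimple} and the associated bookkeeping with $\mu\wedge\nu$, and the only infimum you need is the trivial one $ATV\leq 2\int\rho\,\dd\pi^\ast$. The price is that you do not obtain the exact formula for $ATV$, which the paper reuses in Corollary~\ref{cor:PinskerTight} to check tightness; if you also want that corollary you would still need something like Lemma~\ref{le:ATVsimple}, or at least a matching lower bound on $ATV$ for the product example. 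One presentational nit: the chain $2\sum_k\P_{\pi^\ast}(\tau=k)\leq\sum_k\sqrt{2h_k}$ compresses two steps (the conditional Pinsker bound and Jensen on $\E_\mu[\sqrt{\cdot}\,]\leq\sqrt{\E_\mu[\cdot]}$) into one inequality sign; spelling that out would make the argument self-contained. Your remark about why the diagonal branch must condition $\mu$ and $\nu$ on the \emph{same} history $x_{1:k-1}$ is exactly the point that makes the chain rule applicable, and is well taken.
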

As in the classical case, this inequality is tight, see \Cref{cor:PinskerTight} below. 

\pagebreak

\medskip

We provide some context on the adapted Pinsker inequality \eqref{eq:AP}. 

The adapted total variation was first studied by Eckstein--Pammer \cite{EcPa22}. They establish that  $ATV(\mu, \nu)\leq (2^n -1) TV(\mu, \nu)$,  
in particular the two distances are equivalent. 
This is remarkable since as soon as one of the spaces $(\X_k, d_k), k < n$ is not discrete, the resulting Wasserstein distance $W$ and its adapted counterpart $AW$ are not equivalent, in fact $\AW$ generates strictly finer topologies. 
The idea of bounding an adapted distance by its classical counterpart $W$ under appropriate regularity conditions is put center stage in the work of Blanchet, Larsson, Park and Wiesel \cite{BlLaPaWi24}. In particular they establish that  if one restricts to the set of probabilities with whose kernels are sufficiently regular,  then $AW(\mu, \nu)\leq C^n \sqrt{W(\mu, \nu)}$.

In view of the  dependence on the number of periods in these bounds,  the constant $\sqrt{n}$ in the adapted Pinsker inequality \eqref{eq:AP} appears  surprisingly small. 

On the other hand it can be expected that the chain rule  of entropy is well suited to control adapted variants of the Wasserstein distance. To give an example in this direction, recall that a probability $\gamma$ is said to satisfy a $T_2$-inequality with constant $C$ if the quadratic Wasserstein distance $W_2$ satisfies $W_2(\mu, \gamma) \leq \sqrt{2C H(\mu|\gamma)}$ for all probabilities $\mu$. From chain rule of entropy it follows easily that $T_2$ inequalities tensorize perfectly, i.e.\ $W_2(\mu, \otimes_{k=1}^n\gamma) \leq \sqrt{2C H(\mu|\otimes_{k=1}^n\gamma)}$, see e.g.\ \cite[Proposition 22.5]{Vi09}. Indeed this argument actually implies that the same inequality holds also for the adapted Wasserstein distance, i.e.\ $$AW_2(\mu, \otimes_{k=1}^n\gamma) \leq \sqrt{2C H(\mu|\otimes_{k=1}^n\gamma)},$$ see \cite[Section 5.2]{BaBeLiZa17}  for details. Strikingly $T_2$-inequalities exists also for adapted Wasserstein distances in continuous time (with the Wiener measure as reference measure), see the works of Föllmer \cite{Fo22a} and Lasalle \cite{La18}.

\section{Proofs}
As a preparation for the proof, we require an alternative characterization of bicausal couplings. It will be convenient to use shorthands like $x_{1:k}:=(x_1,\ldots,x_k) $.
Furthermore, we consider the successive  disintegrations
\begin{align*}
    \mu\left(\mathrm{d}x_{1:n}\right) &= \mu_1\left(\mathrm{d}x_1\right)  \mu^{x_1}\left(\mathrm{d}x_2 \right) \ldots \mu^{x_{1:(n-1)}} \left(\mathrm{d}x_n \right) \\
    \nu\left(\mathrm{d}y_{1:n}\right) &= \nu_1\left(\mathrm{d}y_1\right)  \nu^{y_1}\left(\mathrm{d}y_2 \right) \ldots \nu^{y_{1:(n-1)}} \left(\mathrm{d}y_n \right) \\
    \pi\left(\mathrm{d}(x_{1:n},y_{1:n})\right) &= \pi_1\left(\mathrm{d}(x_1,y_1)\right)  \pi^{x_1,y_1}\left(\mathrm{d}(x_2,y_2) \right) \ldots \pi^{x_{1:(n-1)},y_{1:(n-1)}} \left(\mathrm{d}(x_n,y_n) \right).
\end{align*}    
Then, as shown in \cite[Proposition 5.1]{BaBeLiZa17},  $\pi\in \cplbc(\mu, \nu)$ if and only if 
\begin{align}\label{eq:iteratechar}
   \pi_1\in \cpl(\mu_1, \nu_1) \quad \text{and} \quad \pi^{x_{1:k},y_{1:k} }\in \cpl(\mu^{x_{1:k}}, \nu^{y_{1:k}}), \text{ a.s. for $k< n$.} 
\end{align}
This alternative characterization is useful since it allows to calculate adapted transport problems in an iterated fashion.
Specifically, we use it to obtain the following representation of adapted total variation.\footnote{We note that this was observed independently by Acciaio, Hou, and Pammer (oral communication).}
\begin{lemma}\label{le:ATVsimple}
   The adapted total variation distance can be written as
    \begin{align*}
        ATV(\mu,\nu) & = TV(\mu_1,\nu_1) + \int_{\X_1} TV(\mu^{x_1}, \nu^{x_1}) \, \mathrm{d}(\mu_1 \wedge \nu_1)(x_1) + \ldots + \\
        & \int_{\X_{n-1}} TV(\mu^{x_{1:(n-1)}},\nu^{x_{1:(n-1)}}) \, \mathrm{d}(\mu^{x_{1:(n-2)}} \wedge \nu^{x_{1:(n-2)}})(x_{(n-1)}) \ldots \mathrm{d}(\mu_1 \wedge \nu_1)(x_1).
    \end{align*}
    Moreover the infimum in the definition of $ATV$ is attained.
    Indeed, any coupling $\pi$ for which $\pi_1$ and all $ \pi^{x_{1:k},y_{1:k}}, k< n$ put the maximal possible mass on the diagonal is optimal. 
\end{lemma}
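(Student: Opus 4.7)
The plan is to exploit that for the discrete metric $\rho$ on $\X = \X_1 \times \cdots \times \X_n$ one has $\rho(x,y) = 1 - I_{\{x=y\}}$, so $\int \rho \, d\pi = 1 - \pi(D)$ where $D$ is the diagonal of $\X \times \X$. Thus computing $ATV$ amounts to \emph{maximizing} $\pi(D)$ over $\pi \in \cplbc(\mu, \nu)$, and I would show by induction on $n$ that
\begin{equation*}
\sup_{\pi \in \cplbc(\mu,\nu)} \pi(D) = \int (\mu^{x_{1:n-1}} \wedge \nu^{x_{1:n-1}})(\X_n)\, d(\mu^{x_{1:n-2}} \wedge \nu^{x_{1:n-2}})(x_{n-1})\cdots d(\mu_1 \wedge \nu_1)(x_1),
\end{equation*}
with the supremum attained by any bicausal coupling whose iterated conditional couplings place the maximal mass $\mu \wedge \nu$ on each diagonal.

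For the induction step, I would use \eqref{eq:iteratechar} to write $\pi = \pi_1 \otimes \pi^{x_1, y_1}$ and decompose $\pi(D) = \int_{\{x_1 = y_1\}} \pi^{x_1, x_1}(\{x_{2:n} = y_{2:n}\}) \, d\pi_1(x_1, y_1)$. On the diagonal $\pi^{x_1, x_1}$ is a bicausal coupling of $\mu^{x_1}$ and $\nu^{x_1}$, so the inductive hypothesis yields an upper bound $M(x_1)$ given by the iterated $\wedge$-formula for the remaining $n-1$ levels. The outer optimization uses the elementary fact that the diagonal projection of any $\pi_1 \in \cpl(\mu_1, \nu_1)$ is dominated by $\mu_1 \wedge \nu_1$, so $\pi(D) \leq \int M(x_1) \, d(\mu_1 \wedge \nu_1)(x_1)$, with equality for a coupling that saturates this bound at every step. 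The existence of a genuine bicausal coupling realizing the bound is ensured by \eqref{eq:iteratechar}, which lets one freely assemble the fiberwise greedy kernels.

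To pass from $\sup \pi(D)$ to the exact statement of the lemma, I would telescope via the identity $(\mu \wedge \nu)(\X) = 1 - \tfrac{1}{2} TV(\mu, \nu)$. If $P_k$ denotes the maximal matching probability of the first $k$ coordinates and $\lambda_{k-1}(dx_{1:k-1}) = (\mu_1 \wedge \nu_1)(dx_1) \cdots (\mu^{x_{1:k-2}} \wedge \nu^{x_{1:k-2}})(dx_{k-1})$, then applying the identity inside the innermost integral gives the recursion
\begin{equation*}
P_k = P_{k-1} - \tfrac{1}{2} \int TV(\mu^{x_{1:k-1}}, \nu^{x_{1:k-1}}) \, d\lambda_{k-1}(x_{1:k-1}),
\end{equation*}
which telescopes from $P_0 = 1$ to $P_n$, and $ATV = 2(1 - P_n)$ delivers precisely the sum in the statement. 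The main technical subtlety I expect is measurability: the maps $x_{1:k} \mapsto \mu^{x_{1:k}} \wedge \nu^{x_{1:k}}$ and $x_{1:k} \mapsto TV(\mu^{x_{1:k}}, \nu^{x_{1:k}})$ need to be measurable for the iterated integrals to be defined, and the greedy fiberwise optimizers must be glued into a legitimate bicausal kernel; both are handled by standard Polish-space disintegration and measurable selection.
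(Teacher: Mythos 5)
Your proposal is correct, and it takes a related but noticeably different route from the paper. The paper works directly with $\inf_\pi \int \rho \, d\pi$, using the identity $\rho(x_{1:n},y_{1:n}) = \bigvee_k \rho(x_k,y_k)$ to push the infima through the iterated disintegration (via \eqref{eq:iteratechar} and measurable selection), arriving at a one-step transport problem whose cost is $TV(\mu^{x_1},\nu^{y_1}) \leq 2$ on the diagonal and $2$ off it; the optimal $\pi_1$ then saturates the diagonal with $\mu_1 \wedge \nu_1$, and induction closes the argument. You instead reformulate the problem as $\sup_\pi \pi(D)$ for the diagonal $D$, prove by induction (using the same factorization \eqref{eq:iteratechar} and the same bound on the diagonal projection) that this supremum is the iterated integral of the $\wedge$-measures, and then convert back to the stated $TV$-sum by telescoping the identity $(\mu\wedge\nu)(\X) = 1 - \tfrac12 TV(\mu,\nu)$. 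Both arguments rest on the same two pillars — the bicausal factorization and the fact that the diagonal mass of any coupling is dominated by $\mu\wedge\nu$ — and both yield the attainment claim from the fiberwise greedy construction. The trade-off: your reformulation avoids manipulating the nested $\vee$-cost and produces the clean intermediate formula for $\sup\pi(D)$ (which is arguably of independent interest), at the price of an extra algebraic telescoping step; the paper's version lands directly on the stated sum. Your final remark on measurability is apt: the paper delegates exactly this point to \cite[Proposition 5.2]{BaBeLiZa17}, and you would need the same reference or an equivalent measurable-selection argument.
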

Recall that minimum of two probabilities $\mu, \nu$ is given by
 \begin{align*}
\mu\wedge \nu= 
\int{\textstyle  \left| \frac{\dd \mu}{\dd(\mu+\nu)} \wedge \frac{\dd \nu}{\dd(\mu+\nu)}   \right| }\, \dd (\mu+\nu) .
\end{align*}
\begin{proof}[Proof of \Cref{le:ATVsimple}]
    We interpret $\rho$ as the function $I_{x\neq y}$, irrespective of the space containing $x, y$ and note that $\rho(x_{1:k},y_{1:k}) = \rho(x_1,y_1) \vee \rho(x_2,y_2) \vee \ldots  \vee \rho(x_k,y_k)$. 

    For $n=1$, there is nothing to prove. For illustrative purposes we first show the result   in the case $n=2$.
    \begin{flalign*}
        & \phantom{=} ATV(\mu,\nu) \\
        & = 2 \inf_{\pi \in \cplbc(\mu,\nu)} \int_{\X \times \X} \!\!{\rho\left(x_{1:2},y_{1:2}\right)} \ \mathrm{d}\pi\left(x_{1:2},y_{1:2} \right) &&\\
        & = 2 \inf_{\pi_1 \in \cpl(\mu_1,\nu_1)} \ \inf_{\pi^{x_1,y_1} \in \cpl(\mu^{x_1},\nu^{y_1})} \int_{\X_1 \times \X_1} \int_{\X_2 \times \X_2} \!\! \rho(x_1,y_1) \vee \rho(x_2,y_2) \ \mathrm{d}\pi^{x_1,y_1}(x_2,y_2) \ \mathrm{d}\pi_1(x_1,y_1) &&\\ 
        & = 2 \inf_{\pi_1 \in \cpl(\mu_1,\nu_1)} \ \int_{\X_1 \times \X_1} \!\! \rho(x_1,y_1) \ \vee \ \inf_{\pi^{x_1,y_1} \in \cpl(\mu^{x_1},\nu^{y_1})} \int_{\X_2 \times \X_2} \!\! \rho(x_2,y_2) \ \mathrm{d}\pi^{x_1,y_1}(x_2,y_2) \ \mathrm{d}\pi_1(x_1,y_1) &&\\ 
        & = \inf_{\pi_1 \in \cpl(\mu_1,\nu_1)} \ \int_{\X_1 \times \X_1} \!\! 2\rho(x_1,y_1) \ \vee \  TV(\mu^{x_1},\nu^{y_1}) \ \mathrm{d}\pi_1(x_1,y_1) &&\\ 
        & = TV(\mu_1, \nu_1) + \int_{\X_1} \! TV(\mu^{x_1},\nu^{x_1}) \ \mathrm{d}(\mu_1 \wedge \nu_1)(x_1) 
    \end{flalign*}
To pass from line 2 to line 3,  we use the characterization of bicausal couplings given in \eqref{eq:iteratechar} together with a measurable selection arguments, see \cite[Propositon 5.2]{BaBeLiZa17} for a detailed justification.  To pass from line 5 to line 6, note that in line 5 we are given an ordinary transport problem with respect to a cost function that is $\leq 2$ on the diagonal and takes the value $2$ otherwise. A transport plan is optimal  for this problem if it puts as much mass on the diagonal as possible. From this we obtain the desired equality. Note also that the argument shows that all appearing infima are attained if the $\pi_1$ and the kernels $\pi^{x_1,y_1}$ put the maximal possible mass on the diagonal. In particular the infimum in the definition of $ATV$ is attained.
    Thus, we have proved the result in the case $n=2$. 
    
    The general case follows by induction. By  hypothesis we have
    \begin{align*}
        & 2 \inf_{\pi^{x_1,y_1} \in \cpl(\mu^{x_1},\nu^{y_1})} \int_{\X_2 \times \X_2} \!\! \rho(x_2,y_2)  \vee  \ldots  \vee \\ 
        &\quad \inf_{\pi^{x_{1:n-1},y_{1:n-1}} \in \cpl(\mu^{x_{1:n-1},y_{1:n-1}})} \int_{\X_n \times \X_n} \!\! \rho(x_n,y_n) \ \mathrm{d} \pi^{x_{1:n-1},y_{1:n-1}} (x_n,y_n) \ldots \mathrm{d}\pi^{x_1,y_1}(x_1,y_1) \notag \\
        & = TV(\mu^{x_1},\nu^{y_1}) + \int_{\X_2 \times \X_2} \!\! TV(\mu^{x_{1:2}}, \nu^{x_{1:2}}) \ + \ \ldots \ \\ 
        & + \int_{\X_{n-1}\times \X_{n-1}} \!\!\!\! TV(\mu^{x_{1:n-1}}, \nu^{x_{1:n-1}}) \ \mathrm{d}(\mu^{x_{1:n-2}} \wedge \nu^{x_{1:n-2}}) \ldots \mathrm{d}(\mu^{x_1} \wedge \nu^{x_1})(x_2). \notag
    \end{align*}
    Thus, it follows
    \begin{align*}
        & \phantom{=} ATV(\mu,\nu) \\
        & = 2 \inf_{\pi \in \cplbc(\mu,\nu)} \int_{\X \times \X} \!\! \rho(x_{1:n},y_{1:n}) \ \mathrm{d} \pi(x_{1:n},y_{1:n}) \\
        & = 2 \inf_{\pi_1 \in \cpl(\mu_1, \nu_1)} \int_{\X_1 \times \X_1} \rho(x_1,y_1) \vee \inf_{\pi^{x_1,y_1} \in \cpl(\mu^{x_1},\nu^{y_1})} \int_{\X_2 \times \X_2} \!\! \rho(x_2,y_2) \vee \ldots \vee \\ 
        & \inf_{\pi^{x_{1:n-1},y_{1:n-1}} \in \cpl(\mu^{x_{1:n-1},y_{1:n-1}})} \int_{\X_n \times \X_n} \rho(x_n,y_n) \ \mathrm{d} \pi^{x_{1:n-1},y_{1:n-1}} (x_n,y_n) \ldots \mathrm{d}\pi^{x_1,y_1}(x_1,y_1) \mathrm{d}\pi_1(x_1,y_1) \notag \\ 
        & = \inf_{\pi_1 \in \cpl(\mu_1, \nu_1)} \int_{\X_1 \times \X_1} \!\! 2\rho(x_1,y_1) \vee  \Big[ TV(\mu^{x_1},\nu^{y_1}) + \int_{\X_2 \times \X_2} \!\! TV(\mu^{x_{1:2}}, \nu^{x_{1:2}}) \ + \ \ldots \ \\ 
        & + \int_{\X_{n-1}\times \X_{n-1}} \!\!\!\! TV(\mu^{x_{1:n-1}}, \nu^{x_{1:n-1}}) \ \mathrm{d}(\mu^{x_{1:n-2}} \wedge \nu^{x_{1:n-2}}) \ldots \mathrm{d}(\mu^{x_1} \wedge \nu^{x_1})(x_2) \Big] \mathrm{d}   \pi_1(x_1,y_1) \notag \\
        & = TV(\mu_1,\nu_1) + \int_{\X_1} \!\! TV(\mu^{x_1}, \nu^{x_1}) \ + \ldots + \\
        & \int_{\X_{n-1}} \!\! TV(\mu^{x_{1:n-1}},\nu^{x_{1:n-1}}) \ \mathrm{d}(\mu^{x_{1:n-2}} \wedge \nu^{x_{1:n-2}})(x_{n-1}) \ldots \mathrm{d}(\mu_1 \wedge \nu_1)(x_1). \notag 
    \end{align*}
    Attainment follows as before.    This concludes the proof.
\end{proof}

\begin{proof}[Proof of \Cref{thm:MainTheorem}]
    
    For the case $n=1$, this is just the regular Pinsker inequality, and thus there is nothing to prove. For illustrative purposes, we will carry out the proof in the case $n=2$ before we prove the general bound.
Using  \Cref{le:ATVsimple} we find
    \begin{flalign*}
        & \phantom{=} ATV(\mu,\nu) \\
        & = TV(\mu_1, \nu_1) + \int_{\X_1} \!\! TV(\mu^{x_1},\nu^{x_1}) \ \mathrm{d}(\mu_1 \wedge \nu_1)(x_1) && \\ 
        & \leq \sqrt{2H(\mu_1|\nu_1)} + \int_{\X_1} \!\! \sqrt{2H(\mu^{x_1}|\nu^{x_1})} \mathrm{d}(\mu_1 \wedge \nu_1)(x_1) && \\
        & \leq \sqrt{1+|\mu_1 \wedge \nu_1|} \ \sqrt{2H(\mu_1|\nu_1) + \int_{\X_1} \!\! 2H(\mu^{x_1}|\nu^{x_1}) \mathrm{d}(\mu_1 \wedge \nu_1)(x_1)} &&\\
        & \leq \sqrt{2} \ \sqrt{2H(\mu_1|\nu_1) + \int_{\X_1} \!\! 2H(\mu^{x_1}|\nu^{x_1}) \mathrm{d}\mu_1(x_1)} &&\\
        & = \sqrt{2} \ \sqrt{2H(\mu|\nu)}.
    \end{flalign*}
    The first inequality follows from Pinsker's inequality, the second follows from defining the measure $m:= \delta_{0} + \mu_1 \wedge \nu_1$ and the function 
    $$g(s):= \begin{cases}
        2 H(\mu_1|\nu_1) & \text{if }   s=0 \\
        2 H(\mu^{x_1}|\nu^{x_1}) & \text{if} \ s \in \X_1
    \end{cases}$$ 
    and then applying Jensen's inequality to obtain $$\left(\int_{\{0\} \bigcup \X_1} \!\! \sqrt{g} \ \mathrm{d}m / \| m\|  \right)^2\leq \int_{\{0\} \bigcup \X_1} \!\! g \ \mathrm{d}m / \| m\|.$$ 
    The last equality is established by applying the chain rule for entropy.
    The case of an arbitrary $n$ will be analogous to the case $n=2$:
    \begin{align*}
        & \phantom{=} ATV(\mu,\nu) \\ 
        & = TV(\mu_1,\nu_1) + \int_{\X_1} \! TV(\mu^{x_1}, \nu^{x_1}) + \ldots + \int_{\X_{n-1}} \!\!\!\!\! TV(\mu^{x_{1:n-1}},\nu^{x_{1:n-1}}) \ \mathrm{d}(\mu^{x_{1:n-2}} \wedge \nu^{x_{1:n-2}}) \ldots \mathrm{d} (\mu_1 \wedge \nu_1) \\ 
        & \leq \sqrt{n} \ \sqrt{2 H(\mu_1|\nu_1) + \ldots + \int_{\X_1 \times \ldots \times \X_{n-1}} \!\!\!\! 2 H(\mu^{x_{1:n-1}},\nu^{x_{1:n-1}}) \mathrm{d}(\mu^{x_{1:n-2}} \wedge \nu^{x_{1:n-2}}) \ldots \mathrm{d} (\mu_1 \wedge \nu_1}) \\
        & \leq \sqrt{n} \ \sqrt{2 H(\mu|\nu)}.
    \end{align*}
    The first inequality follows from first applying the Pinsker inequality and then defining a measure $m:= \delta_{t} + \mu_1 \wedge \nu_1 + (\mu_1 \wedge \nu_1) \otimes (\mu^{x_1} \wedge \nu^{x_1}) \ + \ldots + (\mu_1 \wedge \nu_1) \otimes (\mu^{x_1} \wedge \nu^{x_1}) \otimes \ldots \otimes (\mu^{x_{1:n-1}} \wedge \nu^{x_{1:n-1}})$ and a function $g$ similarly as in the case $n=2$ on $\{0\} \bigcup \X_1 \bigcup \ldots \bigcup \X_1 \times \ldots \times \X_{n-1}$ and applying the Jensen inequality. Note that since each measure in the definition of $m$ has a total measure less than $1$, we have $|m| \leq n$. The last inequality follows from applying the chain rule of the entropy for disintegration repeatedly.
\end{proof}

The following lemma is of course well known, but we give the proof here since we will use the specific construction in the proof to establish the tightness of the adapted Pinsker inequality \eqref{eq:AP}.
\begin{lemma}\label{tightpinsker}
    The Pinsker inequality is tight.
\end{lemma}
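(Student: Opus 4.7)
The plan is to exhibit an explicit one-parameter family of pairs of probability measures on a two-point space for which the ratio of the two sides of Pinsker's inequality converges to $1$. The natural candidate is Bernoulli measures near the uniform distribution: for small $\epsilon > 0$, take $\X = \{0,1\}$ and set
\[\nu_\epsilon = \tfrac{1}{2}\delta_0 + \tfrac{1}{2}\delta_1, \qquad \mu_\epsilon = \bigl(\tfrac{1}{2}-\epsilon\bigr)\delta_0 + \bigl(\tfrac{1}{2}+\epsilon\bigr)\delta_1.\]
It is worth noting here that this is exactly the building block one would tensorise $n$ times to prove the tightness of the adapted Pinsker inequality in \Cref{cor:PinskerTight}, which is the reason for reproducing the proof explicitly.

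First I would compute $TV$ directly from the definition: on a two-point space the two densities differ by $\pm\epsilon$, so $TV(\mu_\epsilon, \nu_\epsilon) = 2\epsilon$. Next I would expand the entropy
\[H(\mu_\epsilon|\nu_\epsilon) = \bigl(\tfrac{1}{2}+\epsilon\bigr)\log(1+2\epsilon) + \bigl(\tfrac{1}{2}-\epsilon\bigr)\log(1-2\epsilon)\]
to leading order in $\epsilon$ using $\log(1\pm 2\epsilon) = \pm 2\epsilon - 2\epsilon^2 \pm \tfrac{8}{3}\epsilon^3 + O(\epsilon^4)$. After collecting terms the odd contributions cancel and one obtains $H(\mu_\epsilon|\nu_\epsilon) = 2\epsilon^2 + O(\epsilon^4)$; consequently $\sqrt{2H(\mu_\epsilon|\nu_\epsilon)} = 2\epsilon + O(\epsilon^3)$, and the ratio $TV(\mu_\epsilon,\nu_\epsilon)/\sqrt{2H(\mu_\epsilon|\nu_\epsilon)}$ tends to $1$ as $\epsilon \downarrow 0$, establishing tightness.

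There is essentially no obstacle here: the computation is elementary and the only conceptual input is the choice of base parameter $1/2$. With $\nu = \mathrm{Ber}(q)$ for $q \neq 1/2$ the same expansion yields a limiting ratio of $2\sqrt{q(1-q)} < 1$, so the extremality reflects precisely the maximal variance of the uniform distribution on $\{0,1\}$. Since the argument is constructive, it hands us exactly the family $(\mu_\epsilon,\nu_\epsilon)$ that will later be needed to prove the sharpness of the constant $\sqrt{n}$ in \eqref{eq:AP}.
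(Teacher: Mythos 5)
Your proposal is essentially identical to the paper's proof: the same Bernoulli family on $\{0,1\}$ near the uniform distribution (the labeling of $\pm\epsilon$ on $\delta_0,\delta_1$ is the only cosmetic difference), the same computation $TV = 2\epsilon$, and the same Taylor expansion giving $H = 2\epsilon^2 + o(\epsilon^2)$, from which tightness follows. Your aside on why $q=1/2$ is the extremal base parameter is a pleasant observation not in the paper, but the argument itself is the same.
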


\begin{proof}
    Consider a measurable space $(\Omega = \{0,1\},2^\Omega\}$ and measures $\nu = \frac{1}{2}\delta_0+\frac{1}{2}\delta_1$ and $\mu = (\frac{1}{2}+\varepsilon)\ \delta_0 + (\frac{1}{2}-\varepsilon)\ \delta_1$ for $\varepsilon \in \left(0,\frac{1}{2}\right)$. It follows that $TV(\mu,\nu)=2\varepsilon$. 
    Furthermore, using the Taylor expansion of the logarithm, we obtain 
    \begin{align*}\textstyle H(\mu|\nu) & = \textstyle (\frac{1}{2}+\varepsilon) \log\left(\frac{(\frac{1}{2}+\varepsilon)}{\frac{1}{2}}\right)+(\frac{1}{2}-\varepsilon)\log\left(\frac{(\frac{1}{2}-\varepsilon)}{\frac{1}{2}}\right)\\
    &= \textstyle (\frac{1}{2}+\varepsilon) \log(1+2\varepsilon)+(\frac{1}{2}-\varepsilon)\log(1-2\varepsilon) \\ & =  \textstyle (\frac{1}{2}+\varepsilon)(2\varepsilon-2\varepsilon^2)+(\frac{1}{2}-\varepsilon)(-2\varepsilon-2\varepsilon^2)+o(\varepsilon^2) \textstyle = 2\varepsilon^2 + o(\varepsilon^2).
    \end{align*}
    Thus, we conclude $\lim\limits_{\varepsilon \searrow 0} \frac{TV(\mu,\nu)^2}{H(\mu,\nu)}=\lim\limits_{\varepsilon \searrow 0} \frac{(2\varepsilon)^2}{2\varepsilon^2+o(\varepsilon^2)} =2$, proving the tightness of the Pinsker inequality.
\end{proof}

\begin{corollary}\label{cor:PinskerTight}
    The adapted Pinsker inequality $ATV(\mu,\nu) \leq \sqrt{n} \sqrt{2 H(\mu|\nu)}$ is tight for every $n \in \mathbb{N}$. 
\end{corollary}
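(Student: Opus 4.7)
The plan is to tensorise the construction from \Cref{tightpinsker}. Fix $n \in \N$, take $\X_k = \{0,1\}$ for $k=1,\ldots,n$, let $\nu_0 = \tfrac{1}{2}\delta_0 + \tfrac{1}{2}\delta_1$ and $\mu_\varepsilon = (\tfrac{1}{2}+\varepsilon)\delta_0 + (\tfrac{1}{2}-\varepsilon)\delta_1$ for $\varepsilon \in (0,\tfrac12)$, and define the product measures $\nu := \nu_0^{\otimes n}$ and $\mu := \mu_\varepsilon^{\otimes n}$ on $\X = \{0,1\}^n$. The goal is then to show that $ATV(\mu,\nu)/\sqrt{2H(\mu|\nu)} \to \sqrt{n}$ as $\varepsilon \searrow 0$.

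The first step is to compute $ATV(\mu,\nu)$ via \Cref{le:ATVsimple}. Because of the product structure, every disintegration kernel $\mu^{x_{1:k}}$ equals $\mu_\varepsilon$ on the next coordinate, and similarly $\nu^{x_{1:k}} = \nu_0$, so each inner $TV$ term is exactly $TV(\mu_\varepsilon,\nu_0) = 2\varepsilon$. Moreover, $\mu_\varepsilon \wedge \nu_0$ has total mass $1-\varepsilon$, and by the product structure each integrating measure $\mathrm{d}(\mu^{x_{1:(j-1)}} \wedge \nu^{x_{1:(j-1)}})$ likewise contributes a factor $(1-\varepsilon)$. Summing the $n$ levels gives the geometric series
\begin{equation*}
ATV(\mu,\nu) \;=\; 2\varepsilon \sum_{k=0}^{n-1}(1-\varepsilon)^{k} \;=\; 2\bigl(1-(1-\varepsilon)^n\bigr) \;=\; 2n\varepsilon + o(\varepsilon).
\end{equation*}

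The second step is to control $H(\mu|\nu)$ via the chain rule (or directly, since the measures are products): independence gives $H(\mu|\nu) = n\, H(\mu_\varepsilon|\nu_0)$, and the Taylor expansion already carried out in the proof of \Cref{tightpinsker} yields $H(\mu_\varepsilon|\nu_0) = 2\varepsilon^2 + o(\varepsilon^2)$. Consequently $2H(\mu|\nu) = 4n\varepsilon^2 + o(\varepsilon^2)$.

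Combining the two computations one obtains
\begin{equation*}
\lim_{\varepsilon \searrow 0} \frac{ATV(\mu,\nu)^2}{2H(\mu|\nu)}
\;=\; \lim_{\varepsilon\searrow 0}\frac{(2n\varepsilon+o(\varepsilon))^2}{4n\varepsilon^2+o(\varepsilon^2)} \;=\; n,
\end{equation*}
so the constant $\sqrt{n}$ in \eqref{eq:AP} cannot be improved. I do not anticipate any serious obstacle; the only point that requires care is confirming that for product measures the iterated disintegrations stay constant in all earlier coordinates, which is what makes the sum in \Cref{le:ATVsimple} collapse to a clean geometric series rather than a more awkward expression.
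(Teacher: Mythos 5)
Your proposal is correct and follows essentially the same route as the paper: tensorising the Bernoulli example from \Cref{tightpinsker}, evaluating $ATV$ via \Cref{le:ATVsimple} to get the geometric series $2(1-(1-\varepsilon)^n)$, using tensorisation of relative entropy for $H(\mu|\nu)=nH(\mu_\varepsilon|\nu_0)$, and passing to the limit $\varepsilon\searrow 0$. The only cosmetic difference is that you linearise $ATV$ to $2n\varepsilon+o(\varepsilon)$ before squaring, whereas the paper expands the square of $2-2(1-\varepsilon)^n$ directly; both yield the same limit.
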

    
\begin{proof}
    The case $n=1$ is exactly \Cref{tightpinsker}, thus we assume that $n \ge 2$ for the rest of the proof. Consider the measurable space $(\Omega:=\{0,1\}^n,2^\Omega)$ and measures $\mu_i=(\frac{1}{2}+\varepsilon)\ \delta_0 + (\frac{1}{2}-\varepsilon)\ \delta_1$ and $\nu_i = \frac{1}{2}\delta_0+\frac{1}{2}\delta_1$ on $\{0,1\}$ for every $i\in \{1,\ldots,n\}$ as in \Cref{tightpinsker}. Further, define the product measures $\mu := \bigotimes_{i=1}^n \mu_i$ and $\nu := \bigotimes_{i=1}^n \nu_i$. Note that the disintegration of $\mu$ is given by $\mu=\underbrace{\mu_1 \otimes \cdots \otimes \mu_1}_\text{$n$-times}$ 
    and analogously for $\nu$.

    In order to determine $ATV(\mu,\nu)$ we use the decomposition from \Cref{le:ATVsimple} and the fact, that $|\mu_1 \wedge \nu_1| = 1-\varepsilon$ to obtain
    \begin{align*}
        & \phantom{=} ATV(\mu,\nu) \\
        & = TV(\mu_1,\nu_1)+\int_{\X_1} TV(\mu_1,\nu_1)+\ldots +\int_{\X_{n-1}} TV(\mu_1,\nu_1) \ \mathrm{d}(\mu_1 \wedge \nu_1)(x_{n-1}) \ldots \mathrm{d}(\mu_1 \wedge \nu_1)(x_1) \\
        & = 2\varepsilon \sum_{k=0}^{n-1}(1-\varepsilon)^k  = 2\varepsilon \frac{1-(1-\varepsilon)^n}{\varepsilon}  = 2-2(1-\varepsilon)^n.
    \end{align*}
    Since $\mu$ and $\nu$ are product measures, it follows similarly as in the proof of \Cref{tightpinsker} that $H(\mu,|\nu) = n H(\mu_1|\nu_1) = n (2\varepsilon^2+o(\varepsilon^2))$. Due to \Cref{thm:MainTheorem} we  already know that $\frac{ATV(\mu,\nu)^2}{nH(\mu|\nu)}\leq2$ or equivalently $\frac{(2-2(1-\varepsilon)^n)^2}{n^2(2\varepsilon^2+o(\varepsilon^2))} \leq 2$. We conclude the proof by showing that the left hand side of the inequality converges to $2$ as $\varepsilon$ approaches $0$:
    \begin{align*}
        & \phantom{=} \lim_{\varepsilon \searrow 0} \frac{(2-2(1-\varepsilon)^n)^2}{2n^2\varepsilon^2+o(\varepsilon^2)} \\
        & = \lim_{\varepsilon \searrow 0} \frac{4-8(1-\varepsilon)^n+4(1-\varepsilon)^{2n}}{2n^2\varepsilon^2+o(\varepsilon^2)} \\ 
        & = \lim_{\varepsilon \searrow 0} \frac{4-8+8n\varepsilon-8\frac{n(n-1)}{2}\varepsilon^2+o(\varepsilon^2)+4-8n\varepsilon+4\frac{2n(2n-1)}{2}\varepsilon^2+o(\varepsilon^2)}{2n^2\varepsilon^2+o(\varepsilon^2)} \\
        & = \lim_{\varepsilon \searrow 0} \frac{4n^2\varepsilon^2+o(\varepsilon^2)}{2n^2\varepsilon^2+o(\varepsilon^2)} 
         = \lim_{\varepsilon \searrow 0} \frac{4n^2+\frac{o(\varepsilon^2)}{\varepsilon^2}}{2n^2+\frac{o(\varepsilon^2)}{\varepsilon^2}} 
         = \frac{4n^2}{2n^2} = 2.
    \end{align*}
    Note that the same argument applies provided that each space $\X_k$ contains a set of measure $1/2$.
\end{proof}

\medskip
\noindent
{\bf Acknowledgment:}
 This research was funded  in whole or in part by the Austrian Science Fund (FWF) [doi: 10.55776/P35197]. For open access purposes, the author has applied a CC BY public copyright license to any author-accepted manuscript version arising from this submission.	
    
\bibliographystyle{abbrv} 
\bibliography{joint_biblio}

\end{document}